\newtheorem{theorem}{Theorem}
\newtheorem{lemma}{Lemma}
\newtheorem{corollary}{Corollary}
\newtheorem{definition}{Definition}
\newtheorem*{maintheorem}{Theorem 3.1}
\newtheorem*{maintheoremB}{Theorem 3.2}
\newtheorem*{maintheoremC}{Theorem 4.1}
\newtheorem*{maintheoremD}{Theorem 4.2}
\newtheorem*{caolitheorem}{Theorem (Cao-Li \cite{CaoLi2011})}
\newcommand{\eterm}{e^{-|x|^2/4}}
\newcommand{\intS}{\int_\Sigma \lambda^2}
\newcommand{\diverg}{{\text{div}}}
\newcommand{\la}{\langle}
\newcommand{\ra}{\rangle}
\newcommand{\nablaS}{\nabla ^ \Sigma}
\newcommand{\tr}{\text{Tr}}
\newcommand{\ltwo}{L^2_{\lambda^2}(\Sigma)}
\newcommand{\na}{{N_\alpha}}
\newcommand{\nb}{{N_\beta}}
\numberwithin{theorem}{section}
\numberwithin{lemma}{section}
\numberwithin{corollary}{section}
\numberwithin{equation}{section}
\begin{document}
\title{Gaussian Harmonic Forms and Two-Dimensional Self-Shrinkers}
\author {Matthew McGonagle}
\address{Department of Mathematics, Johns Hopkins University, 3400 North Charles Street, Baltimore, MD 21218-2686, USA}
\email{mcgonagle@math.jhu.edu}
\keywords{Mean Curvature Flow, Self-Shrinkers, Harmonic One Forms, Genus, Gaussian Harmonic}
\date{3/29/2012}
\maketitle

\begin{abstract}

We consider 2-dimensional orientable self-shrinkers $\Sigma$ for the Mean Curvature Flow of polynomial volume growth immersed in $\mathbb R^n$.  We look at closed one forms minimizing the norm $\int_\Sigma \eterm |\omega|^2$ in their cohomology class. Any closed form satisfying the Euler-Lagrange equation for this minimization will be called a Gaussian Harmonic one Form (GHF). 

We then use these forms to show that if such a $\Sigma$ has genus $\geq 1,$ then we have a lower bound on the supremum norm of $A^2$. GHF's may also be applied to create an upperbound for the lowest eigenvalue of the operator $L$. In the codimension one case $\Sigma \to \mathbb R^3$, for certain conditions on the principal curvatures we use GHF's to get a lower bound on the index of $L$ depending on the genus $g$. Likewise, in the compact  codimension one case we obtain an estimate of the lowest eigenvalue of $L$ and also on $\inf |x|^2$.
\end{abstract}


\setcounter{section}{0}
\section*{Introduction} 

A surface immeresd in $\mathbb R^n$ is considered a self-shrinker for the Mean Curvature Flow if it satisfies
$$ H = \frac{ x^N}{2}$$
where $x$ is the position vector in $\mathbb R^n$. Self-Shrinkers are important to the Mean Curvature Flow, because they are related to the singularities of the flow \cite{CM2009}.

For a normal vector field $N$ and tangent vector fields $X,Y$ we define the Second Fundamental Form by $A^N(X,Y) = \la \nabla_X^E N,Y \ra$. We also define $B(X,Y) = - \nabla^N_X Y$ and $H = B(i,i)$. Our sign convention is chosen to be consistent with that of Colding-Minicozzi \cite{CM2009}. 

For this paper, we will use $\Sigma$ to be a complete two dimensional orientable self-shrinker immersed in $\mathbb R^n$ of polynomial volume growth. We will use $K$ to refer to the curvature of $\Sigma$. For an orthonormal frame $\na$ in the normal bundle $N\Sigma$, Gauss' Equation gives us that $K = A^\na _{11}A^\na_{22}- A^\na_{12}A^\na_{12}$.

For each $\na$ we may diaganolize $A^\na$ with eigenvalues $\kappa_{\alpha i}$ for a frame $\{i_\alpha\}$ in $T\Sigma$. Note that from Gauss' Equation we have that $K = \frac{1}{2}(A^\na_{ii}A^\na_{jj} - A^\na_{ij}A^\na_{ij})$ and so $K = \sum\limits_\alpha \kappa_{\alpha 1}\kappa_{\alpha 2}$. We denote the components of a tangential vectorfield $X$ with respect to $i_\alpha$ as $X_{\alpha i}$.

For a self-shrinker $\Sigma$, we expect bounds on the curvature to affect the geometry and topology of $\Sigma.$ An argument, pointed out to the author by Professor Minicozzi, shows that if $|A|^2 \leq c_2 |x|^2$ with $c_2 < 1/8$ then $\Sigma$ has finite topology. For any two-dimensional $\Sigma$ we have $|H|^2 \leq 2 |A|^2$. Since we are on a self-shrinker we then have that $|x^N|^2 \leq 8c_2|x|^2$. We then have that $|x^T|^2 \geq (1-8c_2)|x|^2 > 0$ for $|x|$ sufficiently large. Therefore, we may apply Morse Theory to find that the topology is constant for large enough $|x|$.

In Ros \cite{Ros2006} and  Urbano \cite{Urbano2011} harmonic one forms are used to study the index of a minimal surface of genus $g$. The harmonic forms are constructed by minimizing the $L^2$ norm $\int_\Sigma |\omega|^2$ in a cohomology class. From classical riemann surface theory \cite{FK1980} p. 42, we can associate $2g$ linearly independent $L^2$ harmonic one forms to a surface of genus $g$. Ros \cite{Ros2006} and Urbano \cite{Urbano2011} consider the duals to these harmonic one forms as vectors in Euclidean space. They study the Jacobi operator $\triangle_\Sigma + |A|^2$ acting on the coordinate functions of these vectors.

We consider a parallel situation for a self-shrinker $\Sigma$. We examine closed one forms $\omega$ minimizing $\int_\Sigma \eterm |\omega|^2$ in their cohomology class. Any closed one form satsifying the Euler-Lagrange Equation for this minimization will be called a Gaussian Harmonic one Form (GHF). Note that this norm does not come from a conformal change of $\Sigma$. The measure $\eterm dV$ is related to the variational characterization of self-shrinkers in Colding-Minicozzi \cite{CM2009}. A difference between this and the harmonic case is that we only get $g$ linearly independent GHF, because the Euler-Lagrange condition for GHF is not preserved by a rotation. For simplicity in notation we will define $\lambda^2 \equiv \eterm$. We will also denote $\ltwo$ to be the $L^2$ space associated with the measure $\lambda^2 dV$

We will have need to work with vectors in $T \Sigma$, $N \Sigma$, and in $\mathbb R^n$. We will use the indices $\{i,j,...\}$ for vectors or forms considered intrinsically part of $\Sigma$, $\{a, b, ...\}$ when considering vectors in $\mathbb R^n$, and $\{\alpha, \beta,...\}$ when considering vectors in $N\Sigma$. We will often denote an element of a frame by its index where obvious. 

The vectors $\partial_a$ will denote a standard basis of orthonormal vectors for $\mathbb R^n$. We will make use of the exterior derivative of the coordinate function $x^a$ along $\Sigma$ and will denote this by $dx^a$. That is, $dx^a$ is not the exterior derivative in $\mathbb R^n$. Also, for any vector $V \in \mathbb R^n$ let $V^T$ be the projection onto the tangent space of $\Sigma$. For any one form $\omega$ on $\Sigma$ we shall denote the vector dual to $\omega$ considered to be sitting in $\mathbb R^n$ as $W$.

As in Colding-Minicozzi \cite{CM2009}, we define the operators $\mathcal L$ and $L$ on scalar functions of $\Sigma$ as
$$ \mathcal L u = \triangle u - \frac{1}{2} \nabla_{x^T} u$$
and
$$ L u = \mathcal L u + \frac{1}{2}u + |A|^2 u.$$
Note that $\mathcal L = -\nabla^*\nabla$ for the measure $\eterm dV$. That is, $\intS f \mathcal L g = -\intS \la \nabla f, \nabla g \ra $ for any $f,g \in C^\infty_0(\Sigma)$. In particular, $\mathcal L$ is a symmetric operator. The operator $L$ is associated with the second variation related to the variational characterization of self-shrinkers in Colding-Minicozzi \cite{CM2009}.

Since we will be working with vectors in $T\Sigma$, $N\Sigma$, and $\mathbb R^n$ we will use $\nabla^E$ to denote the Euclidean connection, $\nabla^\Sigma = (\nabla^E)^T$ for the connection on $\Sigma$, and $\nabla^N = (\nabla^E)^N$ for the normal connection.

We also define the operators $\mathcal L^E$ and $L^E$ on $\mathbb R^n$ valued vectorfields V along $\Sigma$ by
$$\mathcal L^E V = \tr_\Sigma \nabla^{2,E} V - \frac{1}{2}\nabla^E_{ x^T} V$$
$$ = \nabla^{2,E}_{i,i} V -  \frac{1}{2}\nabla^E_{x^T} V $$
and
$$ L^E V = \mathcal L^E V + \frac{1}{2} V + |A|^2 V.$$
Note that for $V = V^a \partial_a$ we have $\mathcal L^E V = (\mathcal L V^a)\partial_a$ and $L^E V = (L V^a) \partial_a$.

We will likewise use the $\nabla^\Sigma$ connection to define $\mathcal L^\Sigma$ and $L^\Sigma$ on tensors intrinsic to $\Sigma$.

For the first section of the paper we establish some computations for GHF's on $\Sigma$. The computation that we will be using in later sections is contained in Lemma 1.1. For a GHF on a self-shrinker we have that
$$\mathcal L^\Sigma \omega (v) = \frac{1}{2}\omega(v)  -  A^\na(W,j)A^\na(j,v).  $$

For the second section of the paper, we compute $\mathcal L^E W$ for a GHF $\omega$ on a self-shrinker. From Lemma 2.2, we have that
$$ \mathcal L^E W = -2\la \nabla^\Sigma \omega, A^\nb \ra \nb +\frac{1}{2}W - 2 A^\nb (W, i) A^\nb(i,j)j.$$

For the third section, we apply our computations for GHF's to prove two results in the general co-dimension case. The first is a type of ``gap theorem'' for the genus of a self-shrinker.

\begin{maintheorem}
If $\Sigma$ is a 2-dimensional orientable self-shrinker of polynomial volume growth immersed in $\mathbb R^n$ with genus $\geq 1$, then 
$$\sup \limits_{x \in \Sigma, |v|=1} A^\nb(v, i) A^\nb(i,v) \geq 1/2 .$$
\end{maintheorem}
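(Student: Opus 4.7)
The plan is to test Lemma 1.1 against $\omega$ itself in the weighted $L^2_{\lambda^2}$ pairing and use the fact that $\mathcal{L}^\Sigma$ is the drift Laplacian associated to the measure $\lambda^2\,dV$ to integrate by parts. Since $\Sigma$ has genus $g \geq 1$, $H^1(\Sigma;\mathbb{R})$ is nontrivial; I pick any nontrivial cohomology class and minimize $\intS |\omega|^2$ within it to produce a nonzero GHF $\omega$ (existence of such a minimizer is what is implicit in the introduction's claim that there are $g$ linearly independent GHFs). Then $W$ lies in $T\Sigma$ and $|W|^2 = |\omega|^2$.

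Contracting the identity of Lemma 1.1 with $\omega$ and using $\omega(e_k)e_k = W$ for any orthonormal frame $\{e_k\}$ of $T\Sigma$ gives
$$\la \omega, \mathcal{L}^\Sigma \omega\ra = \tfrac{1}{2}|\omega|^2 - A^\na(W,j)A^\na(j,W).$$
Since $\mathcal{L}^\Sigma = -(\nabla^\Sigma)^*\nabla^\Sigma$ relative to $\lambda^2\,dV$, integrating against $\lambda^2$ and using $\int \la \omega, \mathcal{L}^\Sigma \omega\ra\,\lambda^2 = -\int |\nabla^\Sigma\omega|^2\,\lambda^2$ yields
$$\intS A^\na(W,j)A^\na(j,W) = \tfrac{1}{2}\intS |\omega|^2 + \intS |\nabla^\Sigma\omega|^2 \geq \tfrac{1}{2}\intS |\omega|^2.$$
If $S$ denotes the supremum in the theorem, then at each point $A^\nb(W,j)A^\nb(j,W) \leq S|W|^2 = S|\omega|^2$, so the left side is at most $S \intS |\omega|^2$. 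Because $\omega$ is nontrivial, $\intS |\omega|^2 > 0$, and dividing gives $S \geq \tfrac{1}{2}$.

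The main technical point I expect to need care on is the integration by parts on the non-compact $\Sigma$, together with the preceding existence step. Polynomial volume growth combined with the exponential decay of $\lambda^2 = \eterm$ should make all the weighted integrals above absolutely convergent for any $L^2_{\lambda^2}$ minimizer, so a standard cutoff argument with compactly supported test forms should show that $-(\nabla^\Sigma)^*\nabla^\Sigma$ really is the formal adjoint of $\mathcal{L}^\Sigma$ against $\lambda^2\,dV$ without boundary contributions at infinity. The parallel existence of a nontrivial minimizer in each nontrivial cohomology class is what is tacitly invoked in the introduction and should be established in Section 1 within this weighted-function-space framework; once these two points are in hand, the short calculation above is the whole proof.
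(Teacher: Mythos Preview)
Your proposal is correct and follows essentially the same route as the paper: pick a nonzero GHF from the genus assumption, pair Lemma~1.1 (equivalently Corollary~1.1) with $\omega$ in $L^2_{\lambda^2}$, integrate by parts, and compare with the pointwise supremum bound. The only cosmetic difference is that the paper builds the cutoff in from the start by working with $0 \leq \intS |\nablaS(\phi W)|^2$ for $\phi\in C^\infty_0(\Sigma)$ and then letting the cutoffs exhaust $\Sigma$, whereas you write the limiting integrated identity first and defer the cutoff justification to the end; the content is the same.
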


\noindent {\bf Remark:} In the case of $\Sigma \to \mathbb R^3$ we have that $\sup \limits_{x \in \Sigma, |v|=1} A^\nb(v, i) A^\nb(i,v) = \sup \limits_{x\in \Sigma, i}\kappa_i^2$ where the $\kappa_i$ are the principal curvatures of $\Sigma$.

Our result should be compared with the following gap theorem of Cao-Li \cite{CaoLi2011} (after renormalizing to meet our definition of self-shrinker).

\begin{caolitheorem}
If $M^n \to \mathbb R^{n+p}$ ($p \geq 1$) is an n-dimensional complete self-shrinker without boundary and with polynomial volume growth, and satisfies
$$ |A|^2 \leq 1/2$$
then M is one of the following: \\
\indent i) a round sphere in $\mathbb R^{n+1}$ \\
\indent ii) a cylinder in $\mathbb R^{n+1}$ \\
\indent iii) a hyperplane in $\mathbb R^{n+1}.$
\end{caolitheorem}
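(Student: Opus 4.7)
The plan is to derive an integral inequality by pairing Lemma 1.1 against a nonzero Gaussian harmonic one-form. The genus hypothesis supplies a nontrivial class in $H^1(\Sigma;\mathbb R)$, and minimizing $\intS |\omega|^2$ over the class produces a GHF $\omega \in \ltwo$ that is not identically zero. Write $W = \omega_i e_i \in T\Sigma$ for the tangential dual of $\omega$ in an orthonormal frame, and set $S = \sup_{x \in \Sigma,\, |v|=1} A^\nb(v,i) A^\nb(i,v)$. The goal is to squeeze $S \geq 1/2$ out of Lemma 1.1.

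First, pair Lemma 1.1 with $\omega$ itself and sum over the frame. Using $\omega(e_i)\, e_i = W$, this gives the pointwise identity
$$ \la \omega, \mathcal L^\Sigma \omega \ra = \tfrac12 |\omega|^2 - A^\na(W,j)\, A^\na(j,W). $$
Combine this with the tensor Leibniz identity
$$ \tfrac12 \mathcal L |\omega|^2 = |\nablaS \omega|^2 + \la \omega, \mathcal L^\Sigma \omega \ra, $$
which holds for any tensor field, to obtain
$$ \tfrac12 \mathcal L |\omega|^2 = |\nablaS \omega|^2 + \tfrac12 |\omega|^2 - A^\na(W,j)\, A^\na(j,W). $$
Now integrate against $\lambda^2\, dV$. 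Because $\mathcal L$ is, formally, the negative of $\nabla^*\nabla$ for the Gaussian weight, $\intS \mathcal L |\omega|^2 = 0$, and so
$$ \intS A^\na(W,j)\, A^\na(j,W) = \intS |\nablaS \omega|^2 + \tfrac12 \intS |\omega|^2. $$
Finally, apply the definition of $S$ to the unit tangent vector $W/|W|$ to bound the integrand on the left pointwise by $S|W|^2 = S|\omega|^2$. This delivers
$$ (S - \tfrac12) \intS |\omega|^2 \geq \intS |\nablaS \omega|^2 \geq 0, $$
and since $\omega \not\equiv 0$ we conclude $S \geq 1/2$.

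The main technical obstacle is the integration-by-parts step $\intS \mathcal L |\omega|^2 = 0$ on the non-compact self-shrinker: $|\omega|^2$ is not a priori compactly supported, so one must combine the polynomial volume growth of $\Sigma$ with the exponential decay of $\lambda^2$ in a cutoff-function argument, together with $\ltwo$-control on $\omega$ and $\nablaS \omega$ coming from the variational characterization of GHFs. A secondary ingredient is the existence and $\ltwo$-regularity of the GHF minimizer in a nontrivial cohomology class; this is the purpose of the setup in the first sections of the paper and is presumed available at this stage.
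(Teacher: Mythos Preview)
The statement you were asked to prove is the Cao--Li classification theorem: under the hypothesis $|A|^2 \le 1/2$, an $n$-dimensional self-shrinker in $\mathbb R^{n+p}$ must be a sphere, a cylinder, or a hyperplane. In the paper this result is only \emph{quoted} from \cite{CaoLi2011} for comparison purposes; the paper gives no proof of it, and its proof has nothing to do with Gaussian harmonic one-forms.

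What you have actually written is a proof of Theorem~3.1 of the paper, namely that a two-dimensional orientable self-shrinker of polynomial volume growth with genus $\ge 1$ satisfies $\sup_{x,|v|=1} A^{N_\beta}(v,i)A^{N_\beta}(i,v) \ge 1/2$. Your argument invokes a genus hypothesis that is nowhere present in the Cao--Li statement, works only in dimension two, and reaches a conclusion (a lower bound on a curvature quantity) entirely different from the desired rigidity classification. In particular, nothing in your argument produces the list ``sphere, cylinder, hyperplane,'' nor does it even use the hypothesis $|A|^2 \le 1/2$. So as a proof of the stated theorem it is simply off-target.

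Regarding the argument you did write (as a proof of Theorem~3.1), it is essentially the paper's own approach: pair Lemma~1.1 with $\omega$, use the Bochner-type identity $\tfrac12\mathcal L|\omega|^2 = |\nabla^\Sigma\omega|^2 + \langle\omega,\mathcal L^\Sigma\omega\rangle$, integrate against $\lambda^2\,dV$, and read off $S\ge 1/2$. The only difference is cosmetic: the paper carries an explicit cutoff $\phi\in C_0^\infty(\Sigma)$ through the computation from the start and lets its support exhaust $\Sigma$, whereas you integrate formally and defer the cutoff justification to a closing remark. Both routes require the same ingredients (polynomial volume growth, Gaussian weight, $L^2_{\lambda^2}$-control of $\omega$), and yield the same inequality.
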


Cao-Li gives us a gap theorem for $|A|^2$ in any co-dimension. For the co-dimension one case, our result is a gap theorem for the genus based on the size of the largest principal curvature squared $\kappa_i^2$. So, we see that our classification is not completely covered by Cao-Li's result.

Our second result in the general co-dimension case is a bound for the lowest eigenvalue $\eta_0$ of the operator $L$ acting on scalar functions on $\Sigma$.

\begin{maintheoremB}
Let $\Sigma$ be a 2-dimensional orientable self-shrinker of polynomial volume growth immersed in $\mathbb R^n$ with genus $\geq 1.$ The lowest eigenvalue $\eta_0$ of $L$ acting on scalar functions of $\Sigma$ has upper bound given by
$$ \eta_0 \leq -1 +\sup \limits_{x \in \Sigma, |v|=1} A^\nb(v, i) A^\nb(i,v).$$
\end{maintheoremB}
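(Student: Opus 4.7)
The plan is to feed the Euclidean coordinate functions $W^a := \langle W, \partial_a\rangle$ of the dual vector of a nonzero Gaussian harmonic one form into the variational characterization of $\eta_0$. Since $\Sigma$ has genus $\geq 1$, the minimization in Section~1 produces a nontrivial GHF $\omega$, whose dual $W$ is a nonzero tangent vector field along $\Sigma$ (viewed in $\mathbb{R}^n$) with $|W|^2 = |\omega|^2$ and $\sum_a (W^a)^2 = |W|^2$.

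The central identity comes from pairing Lemma~2.2 with $W$ in the ambient $\mathbb{R}^n$. Because $W$ is tangent to $\Sigma$, the normal component $-2\langle \nabla^\Sigma \omega, A^\nb\rangle \nb$ of $\mathcal{L}^E W$ is orthogonal to $W$ and drops out, giving
\[
\langle L^E W, W \rangle \;=\; (1+|A|^2)|W|^2 \;-\; 2\, A^\nb(W,i)\, A^\nb(i,W).
\]
Set $C := \sup_{x \in \Sigma,\, |v|=1} A^\nb(v,i) A^\nb(i,v)$. The crux of the argument is the pointwise curvature inequality
\[
|A(x)|^2 \;\geq\; A^\nb(v,i) A^\nb(i,v) \quad \text{for every unit } v \in T_x \Sigma,
\]
which in two dimensions reduces to the identity $|A^\nb|^2 = A^\nb(v,i)A^\nb(i,v) + A^\nb(v^\perp, i)A^\nb(i, v^\perp)$ summed over $\beta$ (with $v^\perp$ the unit tangent orthogonal to $v$). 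Combined with the definition of $C$, this yields $\langle L^E W, W\rangle \geq (1-C)|W|^2$ pointwise on $\Sigma$.

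Integrating against $\lambda^2\, dV$ and using the componentwise relation $L^E W = (L W^a)\partial_a$, I obtain
\[
\sum_a \int_\Sigma \lambda^2\, W^a\, L W^a \;\geq\; (1-C) \sum_a \int_\Sigma \lambda^2\, (W^a)^2.
\]
A pigeonhole argument then selects some index $a$ with $W^a \not\equiv 0$ for which $\int_\Sigma \lambda^2 W^a L W^a \geq (1-C)\int_\Sigma \lambda^2 (W^a)^2$. Feeding $W^a$ into the Rayleigh-quotient characterization
\[
\eta_0 \;=\; \inf_{f \not\equiv 0} \frac{-\int_\Sigma \lambda^2\, f\, L f}{\int_\Sigma \lambda^2\, f^2}
\]
then yields $\eta_0 \leq -(1-C) = -1 + C$, as desired.

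The main obstacle is the pointwise curvature inequality $|A|^2 \geq A^\nb(v,i) A^\nb(i,v)$, which uses the two-dimensional hypothesis essentially (in higher dimensions the splitting $T_x\Sigma = \mathrm{span}(v) \oplus \mathrm{span}(v^\perp)$ is replaced by an $(n-1)$-dimensional complement and this clean estimate fails). Secondary technical points---the admissibility of $W^a$ as a test function in the weighted Sobolev space $\ltwo$ and the convergence of the integrals---reduce to elliptic regularity for the GHF equation together with the $L^2_{\lambda^2}$ bound on $\omega$ inherent in the minimization procedure.
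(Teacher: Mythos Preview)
Your argument is essentially the paper's own: both pair Lemma~2.2 with $W$, use the two-dimensional inequality $|A|^2 \geq A^{\nb}(v,i)A^{\nb}(i,v)$ (the paper phrases this as $M_p \leq |A|^2(p)$), and feed the Euclidean coordinate functions of $W$ into the Rayleigh quotient for $L$. The only differences are cosmetic: the paper carries a cutoff $\phi\in C^\infty_0(\Sigma)$ throughout (rather than deferring admissibility to a closing remark) and simply sums the Rayleigh inequality over the coordinate index $a$ instead of invoking a pigeonhole step.
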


\noindent {\bf Remark:} From Theorem 3.1 we see that the best upper bound Theorem 3.2 can give us is $\eta_0 \leq -1/2$. In the codimension one case $\Sigma \to \mathbb R^3$, Colding-Minicozzi \cite{CM2009} were able to show that $\eta_0 \leq -1$ by showing that $LH=H$ and that $H$ is in the appropriate weighted space. So, we see that the estimate of Theorem 3.2 isn't optimal for the codimension one case.

In the fourth section, we make applications to the codimension one case $\Sigma \to \mathbb R^3$. We first show a lower bound on the index of $L$ based on the genus of $\Sigma$.

\begin{maintheoremC}
Let $\Sigma$ be a 2-dimensional orientable self-shrinker of polynomial volume growth immersed in $\mathbb R^3$ with genus $g$ and principal curvatures $\kappa_i$. If \\
 $|\kappa_1^2 - \kappa_2^2| \leq \delta < 1$, then the index of $L$ acting on scalar functions of $\Sigma$ has a lower bound given by

$$ \text{Index}_\Sigma(L) \geq \frac{g}{3}.$$
\end{maintheoremC}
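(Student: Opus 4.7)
The plan is to use the $g$ linearly independent GHFs to exhibit a subspace of dimension at least $g/3$ on which the quadratic form $Q(u) = \int_\Sigma u L u\,\lambda^2$ is strictly positive; since the index of $L$ equals the maximal dimension of such a subspace, this will prove the theorem. For each GHF $\omega$ we take as our candidate test functions the three Euclidean coordinate functions $W^1, W^2, W^3$ of the dual tangent vector $W \in T\Sigma \subset \mathbb{R}^3$.

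Step 1: a pointwise identity. Combining Lemma 2.2 with $L^E = \mathcal{L}^E + \tfrac{1}{2} + |A|^2$ and pairing with $W$, which is orthogonal to the codimension-one unit normal $\nb$, gives
$$\la W, L^E W\ra = (1 + |A|^2)|W|^2 - 2\, A^\nb(W,i)\, A^\nb(i,W).$$
In codimension one, diagonalizing $A^\nb$ with principal curvatures $\kappa_1, \kappa_2$ and writing $W = w_1 e_1 + w_2 e_2$ in the eigenbasis rewrites this as $w_1^2(1 + \kappa_2^2 - \kappa_1^2) + w_2^2(1 + \kappa_1^2 - \kappa_2^2)$. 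Under the hypothesis $|\kappa_1^2 - \kappa_2^2| \leq \delta < 1$ both coefficients are at least $1 - \delta$, so $\la W, L^E W\ra \geq (1-\delta)|W|^2$ pointwise. Since $\sum_a W^a L W^a = \la W, L^E W\ra$, integrating yields
$$\sum_{a=1}^3 \int_\Sigma W^a L W^a\,\lambda^2 \geq (1-\delta)\int_\Sigma |W|^2\,\lambda^2 > 0$$
for every nonzero GHF $\omega$.

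Step 2: an injection argument. Let $K$ denote the index of $L$ and let $E \subset \ltwo$ denote the $K$-dimensional span of the eigenfunctions of $L$ with positive eigenvalue; the form $Q$ is non-positive on $E^\perp$. Writing $W^a = (W^a)^E + (W^a)^\perp$ and using $L$-orthogonality of the two components gives
$$\int W^a L W^a\,\lambda^2 = \int (W^a)^E L (W^a)^E\,\lambda^2 + \int (W^a)^\perp L (W^a)^\perp\,\lambda^2,$$
with the second term non-positive. Consider the linear map $\Psi\colon \{\text{GHFs}\} \to E \oplus E \oplus E$ defined by $\omega \mapsto ((W^1)^E, (W^2)^E, (W^3)^E)$. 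If $\omega$ were a nonzero element of $\ker\Psi$, all three $W^a$ would lie in $E^\perp$, forcing $\sum_a \int W^a L W^a\,\lambda^2 \leq 0$, which contradicts Step 1. Hence $\Psi$ is injective and $g \leq 3K$, giving the claimed bound.

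The main obstacle I anticipate is purely functional-analytic: confirming that the $W^a$ are valid test functions in the spectral theory of $L$ and that the decomposition $W^a = (W^a)^E + (W^a)^\perp$ is compatible with the pairing $\int W^a L W^a\,\lambda^2$ computed via Lemma 2.2. The bound $|W^a| \leq |\omega|$ places $W^a$ in $\ltwo$ immediately, and $LW^a$ is given explicitly by Lemma 2.2; the remaining integration-by-parts step and orthogonal splitting should be justified by a cutoff approximation exploiting the polynomial volume growth of $\Sigma$.
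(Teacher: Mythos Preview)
Your Step 1 is correct and matches the paper's computation exactly: using Lemma 2.2 and the codimension-one diagonalisation you obtain $\la W, L^E W\ra \geq (1-\delta)|W|^2$ pointwise, which is the heart of the argument.

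The gap is in Step 2. You assume that $L$ possesses a $K$-dimensional subspace $E$ spanned by genuine eigenfunctions and that $Q$ is non-positive on the $\ltwo$-orthogonal complement $E^\perp$. On a non-compact $\Sigma$ the potential $|A|^2$ need not be bounded, $L$ need not have purely discrete spectrum, and there is no reason such an $E$ exists; the obstacle you flag is therefore not ``purely functional-analytic'' housekeeping but the actual content of the step. Moreover, even granting some spectral decomposition, the $L$-orthogonality of $E$ and $E^\perp$ you invoke to split $\int W^a L W^a\,\lambda^2$ requires the cross terms $\int (W^a)^E L (W^a)^\perp\,\lambda^2$ to vanish, which needs either eigenfunction structure or an integration by parts that is not obviously valid for non-compactly supported $W^a$.

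The paper circumvents both problems in one move. It first replaces $W$ by $\phi W$ with $\phi\in C_0^\infty(\Sigma)$; the Leibniz-type identity $-\intS \la \phi W, L^E(\phi W)\ra = \intS |\nabla\phi|^2|W|^2 - \intS \phi^2\la W, L^E W\ra$ then gives a quantity $<0$ for any nonzero $W$ in a fixed finite-dimensional span, once $\phi$ has large enough support and small enough gradient. For the index side it does not appeal to eigenfunctions at all, but to Fischer--Colbrie's characterisation: there exist $\psi_1,\dots,\psi_J\in\ltwo$ such that any $f\in C_0^\infty(\Sigma)$ orthogonal to all $\psi_i$ satisfies $-\intS fLf\geq 0$. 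The injection then goes into $\mathbb R^{3J}$ via $\phi W\mapsto\bigl(\intS \phi W\psi_1,\dots,\intS\phi W\psi_J\bigr)$, and a kernel element forces $\nabla^E(\phi W)\equiv 0$, hence $\phi W\equiv 0$. Your argument becomes correct if you make exactly these two substitutions: cutoffs in place of bare $W^a$, and Fischer--Colbrie's $\psi_i$ in place of the hypothetical eigenspace $E$.
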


We then show some estimates for the lowest eigenvalue of $L$ and also $\inf \limits_{x \in \Sigma} |x|^2$ in the compact case of $\Sigma \to \mathbb R^3$.

\begin{maintheoremD}
Let $\Sigma$ be a 2-dimensional orientable compact self-shrinker immersed in $\mathbb R^3$ with genus $g \geq 1$ and principal curvatures $\kappa_i$. Let $\eta_0$ be the lowest eigenvalue of $L$ acting on scalar functions. We have
$$ \eta_0 \leq -3/2 + \sup  \limits_{x\in\Sigma} |\kappa_1^2 - \kappa_2^2| .$$

If $ |\kappa_1^2 - \kappa_2^2| \leq \delta < 5/2,$ then
$$ \inf \limits_{x\in\Sigma} |x|^2 \leq \frac{4}{5/2 - \delta}.$$

\end{maintheoremD}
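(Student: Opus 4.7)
The proof combines GHF-based test functions with integral identities that hold only for compact self-shrinkers. Throughout, fix a GHF $\omega$ (available because $g \geq 1$) with dual $W \in T\Sigma$.

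For the eigenvalue estimate, I will use $\phi := \omega(x^T) = \langle W, X\rangle$ as test function in the Rayleigh quotient for $L$. Combining Lemma 2.2 for $\mathcal L^E W$, the shrinker identity $\mathcal L^E X = -X/2$ (a direct consequence of $H = x^N/2$ together with $\mathrm{tr}\,\nabla^{2,E}X = -H$ in the paper's sign convention), and the GHF Euler--Lagrange equation $\mathrm{div}^\Sigma W = \phi/2$ (from $d^*_{\lambda^2}\omega=0$), the product rule for $\mathcal L$ yields the pointwise formula
\[
L\phi = \bigl(\tfrac{3}{2}+|A|^2\bigr)\phi - 4H\langle \nabla^\Sigma\omega, A^N\rangle - 2A^N(W,i)A^N(i,x^T).
\]
The $3/2$ is the new feature: it comes from combining the $1/2$ in the definition of $L$, the $1/2$ in the $W/2$ term of $\mathcal L^E W$, and the $1/2$ in $\mathrm{div}^\Sigma W = \phi/2$. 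In the principal frame of $A^N$ one has the pointwise identity $|A|^2\phi - 2A^N(W,i)A^N(i,x^T) = -(\kappa_1^2-\kappa_2^2)\bigl(W_1(x^T)_1 - W_2(x^T)_2\bigr)$, so the $|A|^2\phi$ term is fully absorbed. Applying the traceless-symmetric decomposition of $\nabla^\Sigma\omega$ forced by $d\omega=0$ and the GHF condition reduces $H\langle \nabla^\Sigma\omega, A^N\rangle$ similarly to a $(\kappa_1^2-\kappa_2^2)$-times-bounded expression; handling these residual terms by Cauchy--Schwarz yields $R_L(\phi)\geq \tfrac32 - \sup|\kappa_1^2-\kappa_2^2|$, hence $\eta_0\leq -\tfrac32 + \sup|\kappa_1^2-\kappa_2^2|$.

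For the $\inf|x|^2$ estimate, the starting point is the integral identity
\[
\int_\Sigma|x|^2\bigl[|W|^2 + |\nabla^\Sigma\omega|^2 - |A^N(W,\cdot)|^2\bigr]\lambda^2 \;=\; 2\int_\Sigma|W|^2\lambda^2,
\]
obtained by integrating $\mathcal L(|W|^2|x|^2) = 0$ on the compact $\Sigma$ and using both $\mathcal L|x|^2 = 4 - |x|^2$ (from the shrinker equation in dimension $2$) and $\mathcal L|W|^2 = |W|^2 - 2|A^N(W,\cdot)|^2 + 2|\nabla^\Sigma\omega|^2$ (obtained by pairing Lemma 1.1 with $\omega$ and using $|\nabla^E W|^2 = |\nabla^\Sigma\omega|^2 + |A^N(W,\cdot)|^2$); the mixed term $\int\langle\nabla|W|^2, x^T\rangle\lambda^2$ is reduced using $\mathrm{div}(x^T) - |x^T|^2/2 = 2 - |x|^2/2$. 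Expanding $|A^N(W,\cdot)|^2 = \tfrac{|A|^2}{2}|W|^2 + \tfrac{\kappa_1^2-\kappa_2^2}{2}(W_1^2-W_2^2)$ in the principal frame, applying $|\kappa_1^2-\kappa_2^2|\leq \delta$ together with $|W_1^2-W_2^2|\leq |W|^2$, and combining with the unweighted identity $\int|A^N(W,\cdot)|^2\lambda^2 = \tfrac12\int|W|^2\lambda^2 + \int|\nabla^\Sigma\omega|^2\lambda^2$ from Lemma 1.1, the $|A|^2$-contributions on the two sides rearrange into $(\tfrac52 - \delta)\int|x|^2|W|^2\lambda^2 \leq 4\int|W|^2\lambda^2$. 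The conclusion $\inf|x|^2\leq 4/(\tfrac52 - \delta)$ follows from $|x|^2\geq \inf|x|^2$ on the left.

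The main obstacle is the principal-frame bookkeeping that reduces every curvature-quadratic appearing in $L\phi$ and in the $\mathcal L(|W|^2|x|^2)$ identity to an expression of the form ``bounded quantity $\times (\kappa_1^2 - \kappa_2^2)$'' rather than the coarser $|A|^2$; the two pointwise identities above are the core observations, and without them the constants $3/2$ and $5/2$ would degrade to the Theorem 3.2 bound. The principal frame is only smoothly defined away from umbilic points, but because the coefficient $\kappa_1^2 - \kappa_2^2$ vanishes there, all global scalars extend continuously and no domain restriction is needed. Compactness of $\Sigma$ is used essentially throughout, both to discard boundary terms in every integration by parts and to ensure that $\phi$, $|W|^2|x|^2$, and their various products lie in $L^2_{\lambda^2}(\Sigma)$.
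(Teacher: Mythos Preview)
Your approach diverges from the paper's in both halves, and in each case there is a genuine gap.

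\textbf{Eigenvalue estimate.} The paper does \emph{not} use $\phi=\omega(x^T)$ as a Rayleigh test function. Instead it pairs $|W|^2$ against the positive first eigenfunction $u$ via the symmetry $\intS |W|^2 Lu = \intS u\,L|W|^2$, and uses the pointwise Bochner-type bound
\[
L|W|^2 \;\ge\; \tfrac{3}{2}|W|^2 + \sum_{i\neq j}(\kappa_i^2-\kappa_j^2)W_j^2 \;\ge\; \bigl(\tfrac{3}{2}-|\kappa_1^2-\kappa_2^2|\bigr)|W|^2 ,
\]
which follows from Corollary~1.1 after dropping $2|\nablaS W|^2\ge 0$. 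Your formula for $L\phi$ is correct, but the step ``$H\langle\nablaS\omega,A^N\rangle$ reduces to a $(\kappa_1^2-\kappa_2^2)$-times-bounded expression'' is false. With $\nablaS\omega=\tfrac{\phi}{4}I+S$ ($S$ symmetric traceless) one gets
\[
4H\langle\nablaS\omega,A^N\rangle \;=\; H^2\phi + 4(\kappa_1^2-\kappa_2^2)S_{11},
\]
so $-\phi L\phi = -\tfrac32\phi^2 + H^2\phi^2 + (\kappa_1^2-\kappa_2^2)\bigl[(W_1 x^T_1-W_2 x^T_2)+4S_{11}\bigr]\phi$. The term $H^2\phi^2$ is nonnegative and is \emph{not} controlled by $\sup|\kappa_1^2-\kappa_2^2|\,\phi^2$ (think of nearly umbilic points with large $|H|$). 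The $S_{11}$ term involves first derivatives of $\omega$ with no bound in terms of $\phi$. Cauchy--Schwarz cannot close this; the Rayleigh quotient of $\phi$ does not yield $-3/2+\sup|\kappa_1^2-\kappa_2^2|$. (There is also no argument that $\phi\not\equiv 0$.)

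\textbf{Estimate on $\inf|x|^2$.} Again the paper simply pairs $|W|^2$ with $|x|^2$ via symmetry of $L$, using $L|x|^2=4-|x|^2$ on one side and the same lower bound on $L|W|^2$ on the other; this immediately gives $4\intS|W|^2\ge(5/2-\delta)\intS|x|^2|W|^2$. Your identity $2\intS|W|^2=\intS|x|^2\bigl[|W|^2+|\nablaS\omega|^2-|A^N(W,\cdot)|^2\bigr]\lambda^2$ is correct, but the promised ``rearrangement'' using the unweighted identity cannot eliminate the weighted terms $\intS|x|^2|A|^2|W|^2$ and $\intS|x|^2|\nablaS\omega|^2$: the unweighted relation carries no $|x|^2$ factor, so it does not interact with them. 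As written, no inequality of the form $(5/2-\delta)\intS|x|^2|W|^2\le 4\intS|W|^2$ follows from what you have.

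The missing idea in both parts is the same: work with $|W|^2$ (not $\omega(x^T)$), throw away $2|\nablaS W|^2\ge 0$ in $L|W|^2$, and exploit the resulting clean lower bound $L|W|^2\ge(3/2-|\kappa_1^2-\kappa_2^2|)|W|^2$ together with the self-adjointness of $L$.
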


\noindent {\bf Remark:} As before, we note that Colding-Minicozzi \cite{CM2009} have shown that in the codimension one case that $\eta_0 \leq -1$. Therefore, we see that the estimate for $\eta_0$ of Theorem 4.2 is only optimal for $|\kappa_1^2 - \kappa_2^2| < 1/2.$


\section*{Acknowledgements}
The author would like to thank Professor William Minicozzi and Professor Joel Spruck for their guidance and support.


\section{Gaussian Harmonic One Forms}
Let $\omega_0$ be a closed $C^\infty_0$ one form, i.e. $d \omega_0 = 0$. The classical harmonic form cohomologous to $\omega_0$ is constructed by minimizing the $L^2(\Sigma)$ norm $\int_\Sigma |\omega|^2$ in the cohomology class of $\omega_0$ \cite{Jost2008}. A form $\omega$ is defined to be harmonic if and only if it is closed ($\nablaS \omega$ is symmetric) and co-closed ($\nablaS \omega$ is traceless) \cite{Ros2006}.

Instead of using harmonic one forms to represent the
cohomology class of $\omega_0$  as in Ros \cite{Ros2006} and Urbano \cite{Urbano2011}, we will use the form minimizing the $\ltwo$ norm

$$ \int_\Sigma \lambda^2 |\omega|^2$$

where $\omega = \omega_0 + df$ (Remember, $\lambda^2 \equiv \eterm$). To find the Euler-Lagrange equation, assume the minimum is achieved by a closed form $\omega$. Also, let $\delta$ be the dual to $d$ with respect to the regular euclidean surface measure $dV$. As in Jost\cite{Jost2008}, the minimization gives us that $0 = \delta(\lambda^2 \omega) = - \diverg(\lambda^2 \omega)$. This is equivalent to $\tr_\Sigma \nablaS \omega = - \omega (\nablaS \log \lambda^2) $.

Using that $\lambda^2 = \eterm$, we get

$$ \tr_\Sigma \nablaS \omega = \frac{1}{2} \omega(x^T). $$

Therefore, we make a definition:
\begin{definition}
A form $\omega$ will be called a Gaussian Harmonic Form (GHF) if and only if $\omega$ is closed $(\nablaS \omega$ is symmetric$)$ and $\omega$ is gaussian co-closed $( \tr_\Sigma \nablaS \omega = \frac{1}{2} \omega(x^T))$.
\end{definition}
We have the following result:


\begin{lemma}
Let $\omega$ be a GHF on any surface $\Sigma$ with vector dual $W \in T\Sigma$, then
$$\mathcal L^\Sigma \omega (v) = K \omega(v) + \frac{1}{2}\omega(v)  - \frac{\la x, \na\ra}{2} A^\na(W,v). $$
On a self-shrinker we have
$$\mathcal L^\Sigma \omega (v) = \frac{1}{2}\omega(v)  -  A^\na(W,j)A^\na(j,v).  $$
\end{lemma}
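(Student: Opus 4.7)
The plan is to compute $\operatorname{tr}_\Sigma \nabla^{2,\Sigma}\omega$ by commuting covariant derivatives and using the closed and gaussian co-closed conditions, then subtract $\tfrac{1}{2}\nabla^\Sigma_{x^T}\omega$. Working in a local orthonormal frame $\{e_j\}$ on $\Sigma$, the closed condition gives $\nabla_i\omega_k=\nabla_k\omega_i$, so
\[
\nabla^i\nabla_i\omega_k \;=\; \nabla^i\nabla_k\omega_i.
\]
Commuting via the Ricci identity $[\nabla_i,\nabla_k]\omega_l$, and using that on the $2$-dimensional $\Sigma$ one has $\mathrm{Ric}=K\,g$, this equals $\nabla_k(\operatorname{div}\omega)+K\omega_k$. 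The gaussian co-closed condition $\operatorname{div}\omega=\tfrac12\omega(x^T)$ then converts the divergence term into $\tfrac12\nabla_k\bigl(\omega_j (x^T)^j\bigr)$, which splits by the Leibniz rule into $\tfrac12(\nabla_k\omega_j)(x^T)^j+\tfrac12\omega_j\nabla_k(x^T)^j$. Using closedness once more, $(\nabla_k\omega_j)(x^T)^j=(\nabla_{x^T}\omega)(e_k)$, and this piece cancels exactly against $-\tfrac12\nabla^\Sigma_{x^T}\omega$ in $\mathcal L^\Sigma\omega$.

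The remaining step in the general formula is to compute $\nabla^\Sigma_v x^T$. Since $\nabla^E_v x=v$ and $x=x^T+x^N$, decomposing $x^N=\langle x,N_\alpha\rangle N_\alpha$ in an orthonormal normal frame gives, after taking tangential parts and using the definition $A^{N_\alpha}(v,w)=\langle\nabla^E_v N_\alpha,w\rangle$,
\[
\nabla^\Sigma_v x^T \;=\; v-\langle x,N_\alpha\rangle A^{N_\alpha}(v,\cdot)^\sharp.
\]
Pairing with $\omega$ (so that $\omega_j(\,\cdot\,)_j=W$-contraction) and combining with the $\tfrac12 K\omega(v)$ and $\tfrac12\omega(v)$ contributions produces exactly the first claimed identity
\[
\mathcal L^\Sigma\omega(v)=K\omega(v)+\tfrac12\omega(v)-\tfrac{\langle x,N_\alpha\rangle}{2}A^{N_\alpha}(W,v).
\]

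For the self-shrinker version, the identity $H=x^N/2$ gives $\tfrac12\langle x,N_\alpha\rangle=H^\alpha=\operatorname{tr}A^{N_\alpha}$, turning the last term into $(\operatorname{tr}A^{N_\alpha})\,A^{N_\alpha}(W,v)$. The key algebraic step is then to invoke Cayley--Hamilton on each symmetric $2\times2$ tensor $A^{N_\alpha}$: for any such tensor $T$,
\[
T(W,j)\,T(j,v) \;=\; (\operatorname{tr}T)\,T(W,v) - (\det T)\,\langle W,v\rangle .
\]
Summing over $\alpha$ and using the Gauss equation $K=\sum_\alpha \det A^{N_\alpha}$ yields $H^\alpha A^{N_\alpha}(W,v)=A^{N_\alpha}(W,j)A^{N_\alpha}(j,v)+K\omega(v)$, which cancels the $K\omega(v)$ term and leaves precisely the second formula. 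The one step requiring care (and the main obstacle) is keeping the Ricci commutation and the term $\nabla^\Sigma_v x^T$ arithmetically bookkept; the rest is routine once those are in hand.
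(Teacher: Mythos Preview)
Your argument is correct and follows essentially the same route as the paper: the paper cites the Weitzenb\"ock formula $\triangle^\Sigma\omega=-(\delta d+d\delta)\omega+K\omega$ and then expands $d\delta\omega$ using the GHF conditions, whereas you re-derive that identity directly via the Ricci commutation $\nabla^i\nabla_i\omega_k=\nabla_k(\operatorname{div}\omega)+K\omega_k$; for the self-shrinker reduction the paper diagonalizes each $A^{N_\alpha}$ in eigenvalue coordinates while you package the same computation as Cayley--Hamilton $T^2=(\operatorname{tr}T)T-(\det T)I$ on each normal component. One small slip to fix: the curvature term from the commutation is $K\omega(v)$, not ``$\tfrac12 K\omega(v)$'' as you wrote---there is no $\tfrac12$ in front of $K$, and with the correct coefficient the terms assemble to the first displayed identity exactly.
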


\begin{proof}
We have a Weitzenbock formula $\triangle^\Sigma \omega = - (\delta d + d \delta) \omega + K \omega$ where $\delta d + d \delta $ is the Hodge Laplacian \cite{Jost2008}.
Now $\delta \omega = - \diverg \omega = -\frac{1}{2} \omega (x^T)$ and $d\omega = 0$. Therefore $- (\delta d + d \delta) \omega = \frac{1}{2} d (\omega(x^T)) = \frac{1}{2} \nabla^\Sigma (\omega(x^T))$. Then using a Leibniz rule we get

$$ \triangle^\Sigma \omega (v) = K \omega + \frac{1}{2}\omega(\nablaS_v x^T) + \frac{1}{2} \nablaS \omega (x^T, v).$$

Now, we use that $\nablaS_v x^T = v - \la x, \na \ra A^\na(v,j)j$ and that $\mathcal L^\Sigma = \triangle^\Sigma - \frac{1}{2}\nablaS_{x^T}$ to get the first equation of the lemma.

If $\Sigma$ is a self-shrinker, then we have that
$$ \mathcal L^\Sigma \omega(v) = K \omega(v) + \frac{1}{2}\omega(v) - A^H (v, W).$$

Decomposing in a normal basis $\{\na\}$ we have that
$$K W - \sum_i A^H(W,i)i = \sum_\alpha \sum_i (\kappa_{\alpha 1}\kappa_{\alpha 2} W_{\alpha i} i_\alpha - 
(\kappa_{\alpha 1}+\kappa_{\alpha 2})\kappa_{\alpha i}W_{\alpha i} i_\alpha)$$
$$ = \sum_\alpha \sum_i -\kappa_{\alpha i}^2 W_{\alpha i} i_{\alpha}$$
$$ = - A^\na(W,i)A^\na(i,j)j.$$

\end{proof}

\begin{corollary}
Let $\omega$ be a GHF on any surface $\Sigma$. Let $W$ be the vectorfield dual to $\omega$. We have
$$\mathcal L^\Sigma W = \la \mathcal L^\Sigma \omega, dx^a \ra \partial_a = KW + \frac{1}{2}W - \frac{\la x, \nb \ra}{2} A^ \nb(W,j)j.$$

On a self-shrinker $\Sigma$, we have
\begin{equation}
\mathcal L^\Sigma  W = \la \mathcal L^\Sigma \omega, dx^a \ra \partial_a = \frac{1}{2}W - A^\nb(W,i)A^\nb(i,j)j. 
\end{equation}
\end{corollary}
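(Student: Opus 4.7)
The first identity, $\mathcal L^\Sigma W = \la \mathcal L^\Sigma \omega, dx^a\ra \partial_a$, is a general fact about the musical isomorphism on $\Sigma$ and has nothing to do with the GHF condition. My plan is to establish it from two observations. First, since $W$ is tangent to $\Sigma$, its $a$-th Euclidean component satisfies
$$ W^a = \g{W}{\partial_a} = \omega(\partial_a^T) = \omega(\nablaS x^a) = \g{\omega}{dx^a}, $$
using that the tangential gradient of the coordinate function $x^a$ is $\partial_a^T$. Second, the operator $\mathcal L^\Sigma$ commutes with index-raising: because $\nablaS$ is the Levi-Civita connection (metric-compatible and torsion-free), both the rough Laplacian $\tr_\Sigma \nablaS \nablaS$ and the first order term $\nablaS_{x^T}$ preserve the musical isomorphism between one-forms and tangent vectors. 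Combining these two observations, the Euclidean components of the tangent vector $\mathcal L^\Sigma W$ are exactly $\g{\mathcal L^\Sigma \omega}{dx^a}$, giving the first equality.

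For the second equality, I would then read off the right-hand side directly from Lemma 1.1 by dualizing in the argument $v$. The scalar multiples $K\omega(v)$ and $\tfrac{1}{2}\omega(v)$ contribute $KW$ and $\tfrac{1}{2}W$ to the dual vector. The remaining piece $\tfrac{1}{2}\g{x}{\nb} A^\nb(W,v)$ is linear in $v$, and since $A^\nb$ is symmetric bilinear, its dual vector with respect to $v$ is $\tfrac{1}{2}\g{x}{\nb} A^\nb(W,j) j$ when expanded in an orthonormal tangent frame $\{j\}$. Summing yields the first displayed identity of the corollary.

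For the self-shrinker case, I would simply apply the same dualization to the second identity of Lemma 1.1, $\mathcal L^\Sigma \omega(v) = \tfrac{1}{2}\omega(v) - A^\nb(W,j)A^\nb(j,v)$. The dual in $v$ of the last term is $A^\nb(W,i)A^\nb(i,j) j$, producing equation (1.1). The computation is essentially bookkeeping: there is no genuine obstacle beyond confirming that $\mathcal L^\Sigma$ respects the musical isomorphism, which is automatic from metric compatibility of $\nablaS$.
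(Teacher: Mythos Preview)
Your proposal is correct and matches the paper's (implicit) argument: the paper gives no proof at all, since the corollary is meant to follow immediately from Lemma~1.1 by passing from the one-form $\omega$ to its metric dual $W$. Your explanation that $\mathcal L^\Sigma$ commutes with the musical isomorphism (by metric compatibility of $\nablaS$), together with $W^a=\g{\omega}{dx^a}$, is precisely the justification the paper leaves to the reader, and the term-by-term dualization of Lemma~1.1 is exactly what is intended.
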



\section{\texorpdfstring{$\mathcal L^E W$ for a GHF $\omega$}{Euclidean L(W) for a GHF w}}

First, we make some computations for $\nablaS dx^a$ and $\triangle^\Sigma dx^a$. 

\begin{lemma}
For a general surface $\Sigma$, if $W$ is the vectorfield dual to a one form $\omega$, then
$$ \la \nablaS \omega, \nablaS dx^a \ra = -\nb^a \la \nablaS \omega, A^\nb \ra $$ 
and
$$\la \omega, \mathcal L^\Sigma dx^a \ra \partial_a =-A^\nb(W,i)A^\nb(i,j)j - \nabla^N_W H + \frac{\nb}{2}A^\nb(x^T, W).$$
\end{lemma}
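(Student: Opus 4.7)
My plan is to derive both identities from a direct computation of the $(0,2)$-tensor $\nablaS dx^a$, combined with the Weitzenbock formula applied to the closed one-form $dx^a$ (as in the proof of Lemma 1.1, but now with $dx^a$ in place of the GHF).

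For the first identity, note that the vector dual to $dx^a$ is $(\partial_a)^T$. For tangent $u, v$ I would expand
\[
(\nablaS_u dx^a)(v) = u\la v, \partial_a\ra - \la \nablaS_u v, \partial_a\ra = \la (\nablaE_u v)^N, \partial_a\ra,
\]
and use $\la \nablaE_u v, \nb\ra = -A^\nb(u,v)$ to write $(\nablaE_u v)^N = -\sum_\beta A^\nb(u,v)\,\nb$. Thus $\nablaS dx^a = -\sum_\beta \nb^a A^\nb$ as a symmetric $(0,2)$-tensor; taking the tensor inner product with $\nablaS\omega$ gives the first identity at once.

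For the second identity, I would apply the Weitzenbock formula to $dx^a$, obtaining
\[
\triangle^\Sigma dx^a = -d\delta\, dx^a + K\, dx^a
\]
since $d\,dx^a = 0$. The coexact piece comes from $\delta dx^a = -\diverg((\partial_a)^T)$; in the paper's convention (where $H = B(i,i)$ satisfies $\la H, \nb\ra = \text{tr}(A^\nb)$) one has $\diverg((\partial_a)^T) = -\la H, \partial_a\ra$, so $\delta dx^a = \la H, \partial_a\ra$. Subtracting $\tfrac{1}{2}\nablaS_{x^T} dx^a$, which by the first identity paired with $W$ contributes $\tfrac{1}{2}\sum_\beta \nb^a A^\nb(x^T, W)$, then evaluating on $W$ and summing against $\partial_a$, yields
\[
\sum_a \la \omega, \mathcal L^\Sigma dx^a \ra \partial_a = -\nablaE_W H + KW + \tfrac{1}{2}\sum_\beta A^\nb(x^T, W)\,\nb.
\]

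The last term already matches the target, leaving an algebraic identity to verify. I would split $\nablaE_W H = \nabla^N_W H + (\nablaE_W H)^T$, where $(\nablaE_W H)^T = A^H(W, i)\,i$ by a Weingarten calculation (using linearity of $A^V$ in its normal argument and $H = \sum_\beta H^\beta\,\nb$). Substituting and then applying the Gauss-equation identity
\[
KW = A^H(W, i)\,i - A^\nb(W, i) A^\nb(i, j)\,j
\]
derived inside the proof of Lemma 1.1 cancels the $A^H(W, i)\,i$ terms and leaves $-\nabla^N_W H - A^\nb(W, i) A^\nb(i, j)\,j$, as required. The chief obstacle is keeping sign conventions consistent across Weitzenbock, the definitions of $B$, $A^\nb$, and $H$, and the identification $\triangle_\Sigma x^a = -\la H, \partial_a\ra$; once those are pinned down, the rest is bookkeeping built on the machinery of Section 1.
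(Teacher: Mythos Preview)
Your argument is correct, and for the first identity it coincides with the paper's: both compute $\nablaS dx^a = -\nb^a A^\nb$ by differentiating the dual vector $(\partial_a)^T$. For the second identity, however, the paper proceeds differently. Rather than invoking Weitzenbock, it differentiates the formula $\nablaS dx^a = -\nb^a A^\nb$ once more in an adapted frame to get $\triangle^\Sigma dx^a(k) = -A^\nb(\partial_a^T,j)A^\nb(j,k) - \nb^a\,\nabla A^\nb(j,j,k)$, and then applies the Codazzi equation to rewrite the last term as $-(\nabla^N_k H)^a$; the quadratic-in-$A$ term appears directly, with no curvature $K$ ever entering. Your route instead produces a $KW$ from Weitzenbock and an $A^H(W,i)i$ from the tangential part of $\nablaE_W H$, and these combine via the Gauss-equation identity $KW - A^H(W,i)i = -A^\nb(W,i)A^\nb(i,j)j$ established inside the proof of Lemma~1.1. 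The paper's computation is more self-contained and makes the role of Codazzi explicit; your approach is slightly more conceptual in that it reuses the Section~1 machinery and exhibits the parallel between $dx^a$ and a GHF, at the cost of importing both Weitzenbock and the Gauss identity. Either way the bookkeeping of signs you flag is the only real hazard, and you have it right.
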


\begin{proof}
We note that $\nablaS \partial_a^T = - \la \partial_a, \nb \ra \nablaS \nb$. Hence $\nablaS dx^a = -\nb^a A^\nb $, and so we get
$$\la \nablaS \omega, \nablaS dx^a \ra = -\nb^a \la \nablaS \omega, A^\nb \ra.$$

Fix a point $p \in \Sigma$. Use a tangential frame $\{ j \}$ and a normal frame $\{ \nb \}$ such that $\nablaS j (p) = 0$ and $\nabla^N \nb (p) = 0$. We have that
$$\triangle dx^a (k) = \nabla_j \nabla dx^a (j,k)$$
$$ = -A^\nb( \partial_a^T, j)A^\nb(j,k) - \nb^a \nabla A^\nb(j,j,k).$$
Using the Codazzi Equation, we get
$$\triangle^\Sigma dx^a (k) =  -A^\nb(\partial_a^T, j)A^\nb(j,k) -  (\nabla^N_k H)^a.$$
So
$$\mathcal L^\Sigma dx^a(k) =  -A^\nb(\partial_a^T, j)A^\nb(j,k) -  (\nabla^N_k H)^a + \frac{1}{2} \nb^a A^\nb(x^T, k).$$

\end{proof}

\begin{corollary}
For a self-shrinker $\Sigma$,
$$\la \omega, \mathcal L^\Sigma dx^a \ra\partial_a =-A^\nb(W, i)A^\nb(i,j)j . $$
\end{corollary}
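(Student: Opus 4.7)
The plan is to start from the identity of Lemma 2.1 and show that the two extra terms, $-\nabla^N_W H$ and $\frac{\nb}{2}A^\nb(x^T, W)$, cancel exactly when $\Sigma$ is a self-shrinker. Everything reduces to differentiating the self-shrinker equation $H = x^N/2$ in the direction $W$.

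First I would rewrite the self-shrinker condition as $2H = x^N$ and apply $\nabla^N_W$ to both sides, obtaining $2\nabla^N_W H = \nabla^N_W x^N$. To handle the right-hand side, I would split the position vector as $x = x^T + x^N$ and differentiate with $\nabla^E_W$. Using $\nabla^E_W x = W$ (which is tangent), the normal parts give
\begin{equation*}
0 = (\nabla^E_W x^T)^N + \nabla^N_W x^N,
\end{equation*}
so $\nabla^N_W x^N = -(\nabla^E_W x^T)^N$.

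Next I would identify $(\nabla^E_W x^T)^N$ with the second fundamental form applied to $x^T$. For any tangent vector $Y$, $(\nabla^E_W Y)^N = -B(W,Y)$ directly from the definition $B(X,Y) = -\nabla^N_X Y$, and decomposing in the normal frame $\{\nb\}$ gives $B(W,Y) = \sum_\beta A^\nb(W,Y)\nb$ (this is just the standard relation between $A^\nb$ and the normal part of the Euclidean connection on tangent vectors, which one verifies by pairing with $\nb$ and differentiating $\la Y, \nb\ra = 0$). Applied with $Y = x^T$, this yields $\nabla^N_W x^N = \sum_\beta A^\nb(W, x^T)\nb$, so that
\begin{equation*}
\nabla^N_W H = \frac{\nb}{2}A^\nb(x^T, W),
\end{equation*}
using symmetry of $A^\nb$ in its two arguments.

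Substituting this back into the formula from Lemma 2.1 makes the last two terms cancel, leaving exactly
\begin{equation*}
\la \omega, \mathcal L^\Sigma dx^a\ra \partial_a = -A^\nb(W,i)A^\nb(i,j)j,
\end{equation*}
which is the claim. There is no real obstacle here: the only delicate point is keeping track of tangential versus normal components when differentiating $x^T$ and $x^N$, and remembering that on a self-shrinker the mean curvature vector is determined by the position vector so that its normal covariant derivative is controlled by $A^\nb$ paired with $x^T$.
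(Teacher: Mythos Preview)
Your proposal is correct and follows essentially the same approach as the paper: both start from Lemma~2.1 and show, via the self-shrinker equation $H = x^N/2$, that $\nabla^N_W H = \tfrac{\nb}{2}A^\nb(x^T,W)$ so the last two terms cancel. The only difference is that you spell out the derivation of $\nabla^N_W H$ via $\nabla^N_W x^N = -(\nabla^E_W x^T)^N = B(W,x^T)$, whereas the paper simply fixes an adapted frame at a point and asserts the identity $-(\nabla^N_W H)^a = -\tfrac{\nb^a}{2}A^\nb(x^T,W)$ without further detail.
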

\begin{proof}
Fix a point $p \in \Sigma$. Use a tangential frame $\{ j \}$ and normal frame $\{ \nb \}$ such that $\nablaS j (p) = 0$ and $\nabla^N \nb (p) = 0$. For a self-shrinker, we compute that
$$-(\nabla^N_W H)^a = -\frac{\nb^a}{2}A^\nb ( x^T, W) .$$
\end{proof}


\begin{lemma}
Let $\omega$ be a GHF on any surface $\Sigma$. Let $W$ be the vectorfield dual to $\omega$. We have

$$ \mathcal L^E W = -2\nb \la \nabla^\Sigma \omega, A^\nb \ra + \frac{\nb}{2} A^\nb(x^T, W) - \nabla^N_W H $$
$$+ KW + \frac{1}{2}W - A^\nb(W, i)A^\nb(i,j)j - \frac{\la x, \nb \ra}{2} A^\nb (W,j)j.$$

On a self-shrinker, we have
\begin{equation}
 \mathcal L^E W = -2\la \nabla^\Sigma \omega, A^\nb \ra \nb +\frac{1}{2}W - 2 A^\nb (W, i) A^\nb(i,j)j.
\end{equation}

\end{lemma}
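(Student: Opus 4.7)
The plan is to reduce the computation to Corollaries 1.1 and 2.1 together with Lemma 2.1 via a Leibniz-type expansion. Since the Euclidean basis vectors $\partial_a$ are constant in $\mathbb R^n$, the operator $\mathcal L^E$ acts coordinate-wise: writing $W = W^a \partial_a$ with $W^a = \la W, \partial_a \ra = \omega(\partial_a^T) = \la \omega, dx^a \ra$, one has $\mathcal L^E W = \mathcal L(\la \omega, dx^a \ra)\partial_a$. This reduces the problem to computing the scalar drift-Laplacian of each pointwise inner product of one-forms.

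Next I would apply the standard product rule for $\mathcal L = \triangle^\Sigma - \tfrac{1}{2}\nablaS_{x^T}$ to an inner product of one-forms. Expanding in a local orthonormal frame on $\Sigma$ (and noting that the drift term is first order, so contributes no extra cross term) gives
$$ \mathcal L \la \omega, dx^a \ra = \la \mathcal L^\Sigma \omega, dx^a \ra + \la \omega, \mathcal L^\Sigma dx^a \ra + 2 \la \nablaS \omega, \nablaS dx^a \ra. $$
Multiplying through by $\partial_a$ and summing then splits the desired formula into three pieces, each of which has already been computed.

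The first piece is precisely Corollary 1.1, giving $\la \mathcal L^\Sigma \omega, dx^a \ra \partial_a = KW + \tfrac{1}{2}W - \tfrac{\la x, \nb \ra}{2}A^\nb(W,j)j$. The second piece is the second formula of Lemma 2.1, giving $\la \omega, \mathcal L^\Sigma dx^a \ra \partial_a = -A^\nb(W,i)A^\nb(i,j)j - \nabla^N_W H + \tfrac{\nb}{2}A^\nb(x^T, W)$. For the third piece, the first formula of Lemma 2.1 yields $\la \nablaS \omega, \nablaS dx^a \ra = -\nb^a \la \nablaS \omega, A^\nb \ra$, so multiplying by $\partial_a$ and using $\nb^a \partial_a = \nb$ gives $-2\la \nablaS \omega, A^\nb \ra \nb$ after the factor of $2$. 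Summing the three contributions produces the general-surface formula verbatim.

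For the self-shrinker case, I would replace Lemma 2.1 by Corollary 2.1 for the second piece and use the self-shrinker form of Corollary 1.1 for the first piece; the terms $-\nabla^N_W H + \tfrac{\nb}{2}A^\nb(x^T, W)$ collapse via $H = x^N/2$, and the $KW$ term is absorbed into $-A^\nb(W,i)A^\nb(i,j)j$ via the Gauss equation. The two resulting copies of $-A^\nb(W,i)A^\nb(i,j)j$ then combine into the single $-2A^\nb(W,i)A^\nb(i,j)j$ in the stated identity. The main obstacle is purely notational bookkeeping among the three connections $\nablaE$, $\nablaS$, $\nabla^N$ and correctly identifying which terms are tangential versus normal; conceptually the proof is just Leibniz plus the previously established lemmas.
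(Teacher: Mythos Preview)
Your proposal is correct and follows essentially the same route as the paper: write $W=\la\omega,dx^a\ra\partial_a$, apply the Leibniz rule for $\mathcal L$ to the inner product, and identify the three resulting pieces via Corollary~1.1 and Lemma~2.1 (with Corollary~2.1 and the self-shrinker version of Corollary~1.1 for the second formula). Your discussion of how the self-shrinker simplifications occur is in fact more explicit than the paper's, which just says the second equation ``is similar.''
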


\begin{proof}
Note that $W = \langle \omega, dx^a \rangle \partial_a$. Hence, $\mathcal L^E W = (\mathcal L \langle \omega, dx^a \ra) \partial_a$.

So we have 
$$\mathcal L^E W = \la\mathcal L^\Sigma \omega, dx^a \ra \partial_a + 
2\la\nablaS \omega, \nablaS dx^a \ra \partial_a + 
\la \omega, \mathcal L^\Sigma dx^a \ra \partial_a.$$

From Corollary 1.1 and Lemma 2.1, we know that
$$\la \mathcal L^\Sigma \omega, dx^a \ra \partial_a = KW + \frac{1}{2}W - \frac{\la x, \nb \ra}{2} A^ \nb(W,j)j,$$
$$ 2\la\nablaS \omega, \nablaS dx^\alpha \ra \partial_\alpha  = -2 \nb \la \nablaS \omega, A^\nb \ra,$$
and
$$\la \omega, \mathcal L^\Sigma dx^a \ra \partial_a =-A^\nb(W,i)A^\nb(i,j)j - \nabla^N_W H + \frac{\nb}{2}A^\nb(x^T, W).$$

Putting these three equations together we complete the proof of the first equation. The second equation, for self-shrinkers, is similar.

\end{proof}


\section{Applications to General Co-Dimension}

\begin{theorem}
If $\Sigma$ is a 2-dimensional orientable self-shrinker of polynomial volume growth immersed in $\mathbb R^n$ with genus $\geq 1$, then 
$$\sup \limits_{x \in \Sigma, |v|=1} A^\nb(v, i) A^\nb(i,v) \geq 1/2 .$$
\end{theorem}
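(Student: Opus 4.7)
The plan is to produce a nontrivial Gaussian Harmonic one-form $\omega$ using the topology of $\Sigma$, and then test the Euler-Lagrange identity of Lemma 1.1 against $\omega$ itself. Since $\Sigma$ has genus $\geq 1$, the first de Rham cohomology is nontrivial, so I fix a smooth closed one-form $\omega_0$ representing a nonzero class. Minimizing $\intS |\omega|^2$ over the affine space $\omega = \omega_0 + df$ in $\ltwo$ produces a smooth GHF $\omega$, and nontriviality of $[\omega_0]$ forces $\omega \not\equiv 0$. The Gaussian weight together with polynomial volume growth are exactly what is needed to make both the minimization and the subsequent integration by parts work.

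Pairing the self-shrinker form of Lemma 1.1 with $\omega$ and summing over an orthonormal tangent frame $\{e_k\}$, using symmetry of $A^\nb$ and $\omega(e_k) = W_k$, yields the pointwise identity
$$ \la \mathcal L^\Sigma \omega, \omega \ra = \tfrac{1}{2}|\omega|^2 - A^\nb(W,j) A^\nb(j, W). $$
Since $\mathcal L^\Sigma = \triangle^\Sigma - \tfrac{1}{2}\nablaS_{x^T}$ and $\nablaS \lambda^2 = -\tfrac{1}{2}x^T \lambda^2$, the weighted integration-by-parts formula
$$ \intS \la \mathcal L^\Sigma \omega, \omega \ra = -\intS |\nablaS \omega|^2 $$
follows by a standard cutoff argument justified by polynomial volume growth and Gaussian decay. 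Combining the two equations gives
$$ \intS A^\nb(W, j) A^\nb(j, W) \;=\; \tfrac{1}{2}\intS |\omega|^2 + \intS |\nablaS \omega|^2 \;\geq\; \tfrac{1}{2}\intS |\omega|^2. $$

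To finish, observe that pointwise $A^\nb(W, j) A^\nb(j, W) = |W|^2\, A^\nb(v, j)A^\nb(j, v)$ with $v = W/|W|$, so setting $C = \sup_{x\in\Sigma,\,|v|=1} A^\nb(v, i)A^\nb(i, v)$ one has $A^\nb(W,j)A^\nb(j,W) \leq C|\omega|^2$ pointwise. Substituting into the inequality above and dividing by the positive quantity $\intS |\omega|^2$ yields $C \geq 1/2$, which is the theorem.

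The step I expect to be the main obstacle is the very first one: producing a smooth, nontrivial GHF on a complete, possibly noncompact $\Sigma$. In the compact case this is immediate from weighted Hodge theory. In the noncompact polynomial-volume-growth case, one must verify coercivity of the functional on a suitable Hilbert completion of the affine class $\{\omega_0 + df\}$, rule out escape of mass to infinity using Gaussian decay, and then upgrade the weak minimizer to a smooth form via elliptic regularity for $\diverg(\lambda^2\omega)=0$. Once the GHF has been secured, the computations above are essentially forced by Lemma 1.1.
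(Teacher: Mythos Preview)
Your proposal is correct and follows essentially the same route as the paper: produce a nontrivial GHF from the genus hypothesis, pair the self-shrinker identity of Lemma~1.1 with $\omega$, integrate against the Gaussian weight, and compare with the supremum $C$. The only cosmetic difference is that the paper carries an explicit cutoff $\phi$ throughout (working with $\phi W$ and the inequality $0\le \intS |\nablaS(\phi W)|^2$) rather than first stating the clean integrated identity $\intS\la\mathcal L^\Sigma\omega,\omega\ra=-\intS|\nablaS\omega|^2$ and deferring the cutoff justification; both arrive at the same final inequality.
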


\begin{proof}

Assume $\Sigma$ has genus $g\geq 1$. Parallel to the result for classical Riemann Surfaces \cite{FK1980}, we then have $g$ linearly independent GHF in $\ltwo.$ Let $\omega$ be one of these GHF and $W$ be the dual to $\omega$.

Consider any $\phi \in C^\infty_0 (\Sigma)$. We have

$$ 0 \leq \intS |\nabla^\Sigma (\phi W)|^2 =  - \intS \la \phi W, \mathcal L^\Sigma (\phi W) \ra$$
\begin{equation} 
= -\intS |W|^2 \phi \mathcal L \phi - \frac{1}{2}\intS \la \nablaS \phi^2, \nablaS |W|^2\ra - \intS \phi^2 \la W, \mathcal L^\Sigma W \ra. \end{equation}

Now, using integration by parts we have that

$$-\frac{1}{2} \intS \la \nablaS \phi^2, \nablaS |W|^2 \ra = \intS |W|^2 \phi \mathcal L \phi + \intS |W|^2 |\nablaS \phi|^2. $$
Putting this into (3.1), we get that
$$ 0 \leq \intS |W|^2 |\nabla \phi|^2 - \intS \phi^2 \la W, \mathcal L^\Sigma W \ra.$$
Then, using equation (1.1) of Corollary 1.1 we have that
$$ 0 \leq \intS |W|^2 |\nabla \phi|^2+ \intS \phi^2 A^\nb(W,i)A^\nb(i,W) - \frac{1}{2} \intS \phi^2 |W|^2 .$$

Let $M = \sup \limits_{x \in \Sigma, |v|=1} A^\nb(v, i) A^\nb(i,v)$. Using standard cut-off functions of increasing domain and $|\nabla^\Sigma \phi|^2 \leq 1$, we get

$$ 0 \leq (M-\frac{1}{2}) \intS |W|^2 .$$

Since $W \not \equiv 0,$ we get the theorem.

\end{proof}


For any non-compact manifold $\Sigma$ the operator $L$ on scalar functions may not have a nice spectrum, but we may still define the lowest eigenvalue of L by

$$ \eta_0 = \inf \limits_{\phi \in C^\infty_0(\Sigma)} \frac{\intS (|\nabla \phi|^2 - |A|^2 \phi^2 - \frac{1}{2}\phi^2)}{\intS \phi^2}$$

We get an upper bound for $\eta_0$.

\begin{theorem}
Let $\Sigma$ be a 2-dimensional orientable self-shrinker of polynomial volume growth immersed in $\mathbb R^n$ with genus $\geq 1.$ The lowest eigenvalue of $L$ acting on scalar functions on $\Sigma$ has upper bound given by
$$ \eta_0 \leq -1 +\sup \limits_{x \in \Sigma, |v|=1} A^\nb(v, i) A^\nb(i,v).$$
\end{theorem}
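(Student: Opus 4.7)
The plan is to apply the variational characterization of $\eta_0$ with the Euclidean coordinate functions $W^a = \la W, \partial_a\ra$ of the vector $W$ dual to a GHF $\omega$, multiplied by a cutoff $\phi \in C^\infty_0(\Sigma)$. Since $\Sigma$ has genus $\geq 1$, the same Riemann-surface-type existence used in Theorem 3.1 produces a nontrivial GHF $\omega \in \ltwo$ whose dual $W \in T\Sigma$ lies in $L^2_{\lambda^2}$. Each scalar $\phi W^a$ is an admissible test function for $L$, and summing the Rayleigh inequality over $a$ yields
$$ \eta_0 \intS \phi^2 |W|^2 \leq -\sum_a \intS (\phi W^a)\, L(\phi W^a). $$

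I would next expand $L(\phi W^a) = \phi L W^a + W^a \mathcal L \phi + 2\la \nabla \phi, \nabla W^a\ra$ and integrate the $\phi(\mathcal L \phi)|W|^2$ term by parts, using $\sum_a W^a \nabla W^a = \tfrac{1}{2}\nabla |W|^2$; the cross terms involving $\la \nabla\phi, \nabla |W|^2\ra$ cancel, collapsing the right-hand side to $\intS |\nabla \phi|^2 |W|^2 - \intS \phi^2 \la W, L^E W\ra$. Writing $L^E W = \mathcal L^E W + (\tfrac{1}{2} + |A|^2)W$ and substituting Lemma 2.2's formula for $\mathcal L^E W$, the inner product with $W$ annihilates the $\la \nablaS \omega, A^\nb\ra \nb$ piece because $W \perp \nb$, leaving
$$ \la W, L^E W\ra = (1 + |A|^2)|W|^2 - 2 A^\nb(W,i) A^\nb(i,W). $$

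The essential step is then to convert this bracketed quantity into $(-1 + M)|W|^2$, where $M = \sup_{x\in\Sigma,\ |v|=1} A^\nb(v,i) A^\nb(i,v)$. A naive estimate $A^\nb(W,i)A^\nb(i,W) \leq M |W|^2$ together with $-|A|^2 \leq 0$ only delivers the weaker $\eta_0 \leq -1 + 2M$. The refinement comes from the frame Cauchy--Schwarz inequality
$$ A^\nb(W,i) A^\nb(i,W) = \sum_{\beta, i} A^\nb(W,i)^2 \leq |A|^2 |W|^2, $$
which lets us group the two offending terms as
$$ 2 A^\nb(W,i) A^\nb(i,W) - |A|^2 |W|^2 \leq A^\nb(W,i) A^\nb(i,W) \leq M |W|^2, $$
yielding
$$ \eta_0 \intS \phi^2 |W|^2 \leq \intS |\nabla \phi|^2 |W|^2 + (-1 + M) \intS \phi^2 |W|^2. $$

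To conclude, I would take the standard cutoffs of increasing domain with $|\nabla \phi| \leq 1$ used in the proof of Theorem 3.1; since $W \in L^2_{\lambda^2}$, the gradient term vanishes in the limit while $\intS \phi^2 |W|^2 \to \intS |W|^2 > 0$, producing $\eta_0 \leq -1 + M$. I expect the main technical hurdle to be spotting the Cauchy--Schwarz bound $A^\nb(W,i)A^\nb(i,W) \leq |A|^2 |W|^2$: without it the $|A|^2$ term created by passing from $\mathcal L$ to $L$ is problematic, and its exact cancellation against the $2 A^\nb(W,i)A^\nb(i,W)$ factor produced by Lemma 2.2 is precisely what delivers the sharp coefficient $-1 + M$.
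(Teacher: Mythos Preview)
Your proposal is correct and follows essentially the same route as the paper: use the coordinates $\phi W^a$ of a cut-off GHF dual as test functions, reduce the Rayleigh sum to $\intS|\nabla\phi|^2|W|^2 - \intS\phi^2\la W,L^E W\ra$, invoke Lemma~2.2 to compute $\la W,L^E W\ra$, and pass to the cut-off limit. The key inequality you single out, $A^{\nb}(W,i)A^{\nb}(i,W)\leq |A|^2|W|^2$, is exactly what the paper records as the pointwise bound $M_p - |A|^2(p)\leq 0$; the two presentations differ only in whether one first passes to the pointwise supremum $M_p$ and then uses $M_p\leq |A|^2$, or groups $2A^{\nb}(W,i)A^{\nb}(i,W)-|A|^2|W|^2\leq A^{\nb}(W,i)A^{\nb}(i,W)\leq M|W|^2$ directly as you do.
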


{\bf Remark:} Note that if we combine Theorem 3.1 with Theorem 3.2 we find that our bound for $\eta_0$ is no better than $\eta_0 \leq -1/2$.

\begin{proof}

Let $\phi \in C^\infty_0(\Sigma)$ and $\omega$ be any non-zero GHF on $\Sigma$ with daul vector field $W.$ Also, let
$M_p = \sup \limits_{v \in T_p\Sigma, |v|=1} A^\nb(v, i) A^\nb(i,v) $. Note that $M_p$ depends on $p \in \Sigma$ and is not the supremum over $\Sigma$. Consider the tangent vector field $\phi W$. Plugging the coordinate functions of $\phi W$ into the definition of $\eta_0$ we get
$$ \eta_0 \intS \phi^2 |W|^2 \leq \intS(|\nabla^E (\phi W)|^2 - |A|^2 \phi^2 |W|^2 - \frac{1}{2}\phi^2|W|^2).$$

Note that our expression involves the Euclidean connection $\nabla^E$. As in the proof of Theorem 3.1, we have that
$$ \intS(|\nabla^E (\phi W)|^2 = \intS |\nabla \phi|^2 |W|^2 - \intS \phi^2 \la W, \mathcal L^E W \ra.$$

Now using equation (2.1) of Lemma 2.2 we have that $-\intS \phi^2 \la W, \mathcal L^E W \ra \leq \intS \phi^2 |W|^2(2M_p - 1/2).$ Then, using standard cut-off functions of increasing domain and $|\nabla^\Sigma \phi|^2 \leq 1$, we get that

$$ \eta_0 \intS |W|^2 \leq \intS |W|^2 (2M_p - |A|^2  - 1).$$

Using that $M_p - |A|^2(p) \leq 0$, we get the theorem.

\end{proof}


\section{\texorpdfstring{Applications to Co-Dimension One in $\mathbb R^3$}{Applications to Co-Dimension One in R3}}

Now, we only consider $\Sigma \to \mathbb R^3$. As in Fischer-Cobrie \cite{FC1985}, the index of the operator $L$ acting on scalar functions on $B_R \subset \Sigma$ is increasing in $R$ for any exhaustion of $\Sigma$ by $B_R$. The index of $L$ on $\Sigma$ is defined to be $\text{Index}_\Sigma(L) = \sup \limits_R (\text{Index}_{B_R}(L))$.

 Following the work of Ros\cite{Ros2006} and Urbano \cite{Urbano2011} on the Jacobi operator on minimal surfaces, we may give lower bounds for the index of $L$ if we have a condition on the principal curvatures. That is, we have

\begin{theorem}
Let $\Sigma$ be a 2-dimensional orientable self-shrinker of polynomial volume growth immersed in $\mathbb R^3$ with genus $g$ and principal curvatures $\kappa_i$. If \\
$|\kappa_1^2 - \kappa_2^2| \leq \delta < 1$, then the index of $L$ acting on scalar functions of $\Sigma$ has a lower bound given by

$$ \text{Index}_\Sigma(L) \geq \frac{g}{3}.$$
\end{theorem}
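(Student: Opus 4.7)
Plan: The approach is to adapt the Ros--Urbano-style index argument, using GHFs in place of harmonic one-forms. I would begin by taking $g$ linearly independent GHFs $\omega_1,\dots,\omega_g$ in $\ltwo$ (as in the proof of Theorem 3.1), with tangential duals $W_k$ and coordinate functions $W_k^a=\la W_k,\partial_a\ra$. The goal is to produce a subspace of $C^\infty_0(\Sigma)$ of dimension at least $\lceil g/3\rceil$ on which the quadratic form $Q(f)=\intS f L f$ is strictly positive; such a subspace contributes at least $g/3$ to the index of $L$ on an exhausting $B_R$, so the exhaustion definition of $\mathrm{Index}_\Sigma(L)$ yields the claim.

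The central computation is the $g\times g$ bilinear form $T_{kl}=\intS W_k\cdot L^E W_l$ built from the vector-valued fields $W_k$. Lemma 2.2 together with $W_k\cdot\nb=0$ and specialization to a principal-direction frame on $\Sigma\to\mathbb R^3$ yield the pointwise identity
\[
 W_k\cdot L^E W_l = W_k\cdot W_l+(\kappa_1^2-\kappa_2^2)\bigl(\la W_k,e_2\ra\la W_l,e_2\ra-\la W_k,e_1\ra\la W_l,e_1\ra\bigr),
\]
so that $F\cdot L^E F\geq(1-\delta)|F|^2$ pointwise for every $F=\sum c_k W_k$, by the principal-curvature hypothesis. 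Introducing a standard cutoff $\phi$ with $|\nabla^\Sigma\phi|^2\leq 1$ and large support, the polarized identity
\[
 \intS(\phi V)\cdot L^E(\phi U)=\intS\phi^2\,V\cdot L^E U-\intS(V\cdot U)|\nabla\phi|^2
\]
(proved componentwise, exactly as in the proof of Theorem 3.2) gives $c^T T^\phi c\geq(1-\delta)\intS\phi^2|F|^2-\intS|F|^2|\nabla\phi|^2$ for the cutoff matrix $T^\phi_{kl}=\intS(\phi W_k)\cdot L^E(\phi W_l)$. Using $W_k\in\ltwo$, linear independence of the $W_k$, and compactness of the unit sphere in $\mathbb R^g$, this quantity is strictly positive for every $c\neq 0$ once $\phi$ has sufficiently large support; hence $T^\phi$ is positive definite on $\mathbb R^g$.

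Next, I would decompose $T^\phi=N_1^\phi+N_2^\phi+N_3^\phi$ where $(N_a^\phi)_{kl}=\intS(\phi W_k^a)\,L(\phi W_l^a)$, using the identity $(L^E W)^a=LW^a$ recorded in the paper. The main linear-algebra observation is: if, for each $a$, the positive eigenspace of $N_a^\phi$ had dimension strictly less than $g/3$, then those three positive eigenspaces would span a subspace of total dimension strictly less than $g$, and any nonzero $c$ in its orthogonal complement would lie in the non-positive eigenspace of every $N_a^\phi$, giving $c^T N_a^\phi c\leq 0$ for each $a$ and hence $c^T T^\phi c\leq 0$, contradicting positive definiteness of $T^\phi$. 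Therefore some $N_{a^*}^\phi$ admits a positive eigenspace $U^+\subset\mathbb R^g$ of dimension at least $\lceil g/3\rceil$.

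On $U^+$ the map $c\mapsto\phi\sum c_k W_k^{a^*}$ is injective (else a nonzero kernel element would yield $c^T N_{a^*}^\phi c=0$, contradicting positive definiteness on $U^+$), and its image is a $(\geq g/3)$-dimensional subspace of $C^\infty_0(B_R)$ on which $Q$ is strictly positive, for any $B_R$ containing $\mathrm{supp}\,\phi$. This gives $\mathrm{Index}_{B_R}(L)\geq g/3$ for $R$ large, and hence $\mathrm{Index}_\Sigma(L)\geq g/3$ by the exhaustion definition. The hardest step is the pigeonhole linear-algebra lemma: it is the mechanism that converts a single $g$-dimensional positivity property of the vector-valued form $T^\phi$ into a $g/3$-dimensional positivity property for scalar test functions, whereas the cutoff convergence is a routine continuity argument given that the $\omega_k$ already sit in $\ltwo$.
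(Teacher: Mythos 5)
Your proposal is correct, and the core analytic input is the same as the paper's: the pointwise identity $\la W,L^EW\ra=|W|^2+\sum_{i\neq j}(\kappa_j^2-\kappa_i^2)W_i^2\geq(1-\delta)|W|^2$ from Lemma 2.2, plus the cutoff identity and $W_k\in\ltwo$ to make the matrix $T^\phi$ positive definite on a finite-dimensional span of GHF duals. Where you genuinely diverge is the counting step. The paper assumes $\mathrm{Index}_\Sigma(L)=J<\infty$ and invokes the Fischer-Colbrie characterization: there are $\psi_1,\dots,\psi_J$ such that orthogonality to all $\psi_i$ forces $-\intS fLf\geq0$; it then defines $F:\phi V\to\mathbb R^{3J}$ by pairing the three coordinates of $\phi W$ with the $\psi_i$, shows $\ker F=0$ (an element of the kernel would be simultaneously stable and, by the Lemma 2.2 computation, unstable, hence parallel and compactly supported, hence zero), and concludes $g\leq\dim V\leq 3J$. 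You instead bypass Fischer-Colbrie and the finiteness reduction entirely: you decompose $T^\phi=N^\phi_1+N^\phi_2+N^\phi_3$ coordinatewise (using $(L^EW)^a=LW^a$ and symmetry of $L$ on compactly supported functions), run the pigeonhole argument on positive eigenspaces of the symmetric matrices $N^\phi_a$, and directly exhibit a $\lceil g/3\rceil$-dimensional subspace of $C^\infty_0(B_R)$ on which the index form is negative definite, so the exhaustion definition gives the bound. Your route is constructive and needs no a priori finite-index hypothesis; the paper's route trades the eigenspace pigeonhole for a shorter kernel-dimension count but leans on the Fischer-Colbrie machinery. One small imprecision in your write-up: the ``polarized'' cutoff identity $\intS(\phi V)\cdot L^E(\phi U)=\intS\phi^2\,V\cdot L^EU-\intS(V\cdot U)|\nabla\phi|^2$ has an extra antisymmetric term $\intS\phi\left(V\cdot\la\nabla U,\nabla\phi\ra-U\cdot\la\nabla V,\nabla\phi\ra\right)$ in general; it vanishes on the diagonal $U=V$ (equivalently after symmetrizing, which is all you use since $T^\phi$ and the $N^\phi_a$ are symmetric), so the argument is unaffected, but state it for the quadratic form rather than the bilinear one.
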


\begin{proof}
We may assume $\text{Index}_\Sigma (L) = J < \infty.$ As in Fischer-Colbrie \cite{FC1985}, there exist $\ltwo$ functions $\psi_1,...,\psi_J$ such that if  $f \in C^\infty_0(\Sigma)$ and $\intS f \psi_i = 0$ for all $i$ then $-\intS f Lf \geq 0$.

Similar to Farkas-Kra\cite{FK1980} p.42, we have $g$ linearly independent $\ltwo$ GHF's $\omega_i$ with dual vectors $W_i$. Allowing for $g=\infty,$ we define $V = \text{span\{ Finite number of }W_i\text{'s\}}$ where we are considering these to be vector fields with values in $\mathbb R^3$. Consider any $\phi \in C^\infty_0(\Sigma).$ Similar to the calculation in Theorem 3.1, we have that for any $W \in V$ that
$$ - \intS \la \phi W , L^E (\phi W) \ra = \intS |W|^2 |\nabla \phi|^2 - \intS \phi^2 \la W, L^E W \ra.$$

Using equation (2.1) of Lemma 2.2 we get that 
$$\intS \phi^2 \la W, L^E W \ra = \intS \phi^2 (|W|^2 + \sum \limits_{i\neq j} (\kappa_j^2 - \kappa_i^2)W_i^2)$$
$$ \geq \intS \phi^2 |W|^2 (1 - |\kappa_1^2 - \kappa_2^2|) \geq (1-\delta) \intS \phi^2 |W|^2.$$

Since $\dim V < \infty$, by using a standard cut-off function of large enough domain and $|\nabla \phi|^2$ small enough, we may ensure that $\dim V = \dim \phi V$ and that $- \intS \la \phi W, L^E (\phi W) \ra \leq 0$ for all $\phi W \in \phi V$.

We consider the linear map $F: \phi V \to \mathbb R^{3J}$ given by
$$ F(\phi W) = (\intS \phi W\psi_1,\,\, ...\,\, , \intS \phi W \psi_J).$$

For any $\phi W \in \phi V$, if $\phi W \in \text{Ker} F$ then each of its coordinate functions is orthogonal to every $\psi_i$. Therefore, $-\intS \la \phi W, L^E \phi W \ra \geq 0$. Therefore, since $\phi W \in \phi V$ we have that $\intS \la \phi W, L^E \phi W \ra = 0$. Hence, $\nabla^E (\phi W) \equiv 0$ and $\phi W$ is a constant vector. Since $\phi W$ has compact support, we must have that $\phi W \equiv 0$. Therefore, $\text{Ker} F = 0$.

So $\dim V \leq 3J$. By choosing larger and larger subspaces $V$ of $\text{span}\{W_i\}$, we have the theorem.
\end{proof}

For the case of compact $\Sigma$ with genus $\geq 1$ we may give another bound for the lowest eigenvalue of $L$. We may also give a bound for $\inf \limits_{x\in\Sigma}|x|^2$ if $|\kappa_1^2-\kappa_2^2|\leq \delta < 5/2.$

\begin{theorem}
Let $\Sigma$ be a 2-dimensional orientable compact self-shrinker immersed in $\mathbb R^3$ with genus $g \geq 1$ and principal curvatures $\kappa_i$. Let $\eta_0$ be the lowest eigenvalue of $L$ acting on scalar functions on $\Sigma$. We have that
$$ \eta_0 \leq -3/2 + \sup \limits_{x\in\Sigma} |\kappa_1^2 - \kappa_2^2| .$$

If $ |\kappa_1^2 - \kappa_2^2| \leq \delta < 5/2$, then
$$ \inf \limits_{x\in\Sigma} |x|^2 \leq \frac{4}{5/2 - \delta}.$$

\end{theorem}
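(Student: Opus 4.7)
The plan is to use the scalar function $\phi = |W|^2$ as a test function in the Rayleigh quotient for $\eta_0$, where $W$ is the tangent dual of any nonzero GHF $\omega$ on $\Sigma$; such an $\omega$ exists because $g \geq 1$. Since $\Sigma$ is compact, $|W|^2$ is a smooth function on all of $\Sigma$ and no cutoff is needed.

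The central computation is $L|W|^2$. Expanding $\mathcal{L}|W|^2 = 2\langle W, \mathcal{L}^\Sigma W\rangle + 2|\nabla^\Sigma W|^2$ and substituting the self-shrinker identity $\mathcal{L}^\Sigma W = \tfrac{1}{2}W - A^{N_\beta}(W,i)A^{N_\beta}(i,j)j$ from Corollary 1.1, then diagonalizing the shape operator in a local principal frame $\{e_1, e_2\}$ with principal curvatures $\kappa_1, \kappa_2$, I obtain
\[
L|W|^2 \;=\; 2|\nabla^\Sigma W|^2 + \tfrac{3}{2}|W|^2 + (\kappa_2^2 - \kappa_1^2)(W_1^2 - W_2^2),
\]
where $W_i = \langle W, e_i\rangle$. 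Setting $\delta = \sup_\Sigma|\kappa_1^2 - \kappa_2^2|$ and using $|W_1^2 - W_2^2| \leq |W|^2$ (and discarding the nonnegative gradient term) yields the pointwise bound $L|W|^2 \geq (\tfrac{3}{2} - \delta)|W|^2$. Multiplying by $|W|^2 \geq 0$ and integrating against $\lambda^2 dV$ gives $\int|W|^2 L|W|^2 \lambda^2 \geq (\tfrac{3}{2} - \delta)\int|W|^4 \lambda^2$, so the Rayleigh quotient satisfies $R(|W|^2) \leq -\tfrac{3}{2} + \delta$, establishing the first claim.

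For the bound on $\inf|x|^2$, the plan is to multiply the pointwise inequality $L|W|^2 \geq (\tfrac{3}{2} - \delta)|W|^2$ by $|x|^2$ and integrate, using self-adjointness of $L$ in the weighted $L^2$ together with the self-shrinker identity $L|x|^2 = 4 - \tfrac{1}{2}|x|^2 + |A|^2 |x|^2$ (which follows from $\mathcal{L}|x|^2 = 4 - |x|^2$). This produces
\[
(2 - \delta)\int |x|^2 |W|^2 \lambda^2 \;\leq\; 4\int|W|^2 \lambda^2 + \int |A|^2 |x|^2 |W|^2 \lambda^2.
\]
The cross term $\int|A|^2 |x|^2 |W|^2 \lambda^2$ should then be controlled by further auxiliary identities: testing the pointwise inequality against $\phi \equiv 1$ gives $\int |A|^2|W|^2 \lambda^2 \geq (1 - \delta)\int|W|^2 \lambda^2$, and combining this with the pointwise estimate $|x|^2 \geq \inf|x|^2$ together with the compact averaging identity $\int|x|^2 \lambda^2 = 4\int\lambda^2$ (from $\int \mathcal{L}|x|^2 \lambda^2 = 0$) should yield the bound $\inf|x|^2 \leq 4/(\tfrac{5}{2} - \delta)$ after rearrangement.

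The main obstacle I anticipate is the second part. The constant $\tfrac{5}{2}$ must arise from combining the $2 - \delta$ coefficient produced by the $|x|^2$-test with an extra $\tfrac{1}{2}$ from the Gaussian weight via the $\tfrac{1}{2}$-term in $L$, but the mixed integral $\int |A|^2 |x|^2 |W|^2 \lambda^2$ couples curvature, position, and the GHF in a way that requires careful balancing of the identities to produce precisely this constant without losing $\delta$-dependence.
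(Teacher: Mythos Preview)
Your argument is correct and rests on exactly the pointwise estimate the paper uses, namely
\[
L|W|^2 \;=\; 2|\nabla^\Sigma W|^2 + \tfrac{3}{2}|W|^2 + (\kappa_2^2-\kappa_1^2)(W_1^2-W_2^2)\;\ge\;\bigl(\tfrac{3}{2}-\delta\bigr)|W|^2 .
\]
The only difference is cosmetic: you insert $|W|^2$ directly into the Rayleigh quotient, whereas the paper pairs $|W|^2$ with the positive ground-state eigenfunction $u$ via $\int \lambda^2 |W|^2 Lu=-\eta_0\int\lambda^2 |W|^2 u$ and then moves $L$ across by self-adjointness. Your route is a bit more elementary since it does not invoke existence and positivity of $u$.

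\textbf{Second inequality.} Your strategy---pair $|W|^2$ with $|x|^2$ through self-adjointness of $L$ and use the pointwise lower bound on $L|W|^2$---is exactly the paper's. The divergence is in the formula for $L|x|^2$. The paper writes $L|x|^2=4-|x|^2$, and with that the computation closes in one line:
\[
4\!\int\!\lambda^2|W|^2-\!\int\!\lambda^2|x|^2|W|^2=\!\int\!\lambda^2|x|^2L|W|^2\ge\bigl(\tfrac{3}{2}-\delta\bigr)\!\int\!\lambda^2|x|^2|W|^2,
\]
hence $4\int\lambda^2|W|^2\ge(\tfrac{5}{2}-\delta)\int\lambda^2|x|^2|W|^2$. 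You instead expand $L|x|^2=4-\tfrac{1}{2}|x|^2+|A|^2|x|^2$, which is what one gets from $\mathcal L|x|^2=4-|x|^2$ and $L=\mathcal L+\tfrac{1}{2}+|A|^2$, and this produces the extra term $\int\lambda^2|A|^2|x|^2|W|^2$ on the wrong side. Your proposed absorption of that term (testing against $1$, using $\int\lambda^2|x|^2=4\int\lambda^2$, etc.) is only a sketch and, as you suspected, I do not see how it yields the constant $\tfrac{5}{2}$: those identities do not control the \emph{mixed} integral $\int\lambda^2|A|^2|x|^2|W|^2$. So this part of your proposal has a genuine gap.

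It is worth noting that the discrepancy is not a flaw in your reasoning: the paper's identity $L|x|^2=4-|x|^2$ is inconsistent with its own definition of $L$, and self-adjointness of $L$ is the same as self-adjointness of $\mathcal L$ since they differ by a multiplication operator. If one uses the honest formula you wrote, the $|A|^2$ cross term really does appear, and the paper's lower bound $L|W|^2\ge\tfrac{3}{2}|W|^2-\delta|W|^2$ has already consumed the $|A|^2$ contribution from the potential, so it cannot be used again to cancel it. In other words, the obstacle you flagged is a real one in the argument as written, not an artifact of your approach.
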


\begin{proof}

Let $u$ be the eigenfunction for the the lowest eigenvalue $\eta_0$. Note that by standard theory, $u > 0$. Let $\omega$ be a GHF with $W$ its dual vector field. Consider the equation
$$ \intS |W|^2 Lu = -\eta_0 \intS |W|^2 u .$$

Perform integration by parts on the LHS, use that \\
$L|W|^2 \geq 2\la W, \mathcal L^\Sigma W \ra + (|A|^2 + \frac{1}{2})|W|^2,$ and use equation (1.1) of Corollary 1.1 to get
$$ -\eta_0 \intS |W|^2 u \geq \intS \frac{3}{2}|W|^2 u + \intS u \sum \limits_{i \neq j} (\kappa_i^2 - \kappa_j^2)W_j^2.$$

Since $u |W|^2 \geq 0$ and $u|W|^2 \neq 0$ we get that for some $i,j$ and point $p \in \Sigma$ that $0 \geq \frac{3}{2}+\eta_0 + \kappa_i^2-\kappa_j^2$. So we get the first inequality of the theorem:
$$ \eta_0 \leq \kappa_j^2 - \kappa_i^2 - 3/2 \leq -3/2 + \sup |\kappa_1^2 - \kappa_2^2|.$$

Now, consider the case that $|\kappa_1^2 - \kappa_2^2| \leq \delta < 5/2$. Since we are on a two-dimensional self-shrinker, we know from Colding-Minicozzi \cite{CM2009} that $L |x|^2 = 4 - |x|^2$. We use this equality to get
\begin{equation} \intS |W|^2 L|x|^2 = 4\intS |W|^2 - \intS |W|^2|x|^2. \end{equation}

We perform integration by parts on the LHS to get
\begin{equation} \intS |W|^2 L|x|^2 = \intS |x|^2 L|W|^2 \geq \intS 3/2|x|^2|W|^2 + \intS |x|^2 \sum \limits_{i\neq j} (\kappa_i^2 - \kappa_j^2) W_j^2.\end{equation}

Putting equations (4.1) and (4.2) together, we get
$$ 4\intS |W|^2 \geq (5/2 - \delta) \intS |x|^2 |W|^2 \geq (5/2 - \delta)(\inf |x|^2)\intS |W|^2.$$
Since $W \neq 0,$ we get the second inequality of the theorem.

\end{proof}


\bibliographystyle{plain}
\bibliography{GHFref}

\end{document}